\newtheoremstyle{mystyle}{}{}{\slshape}{2pt}{\scshape}{.}{ }{} 
\newtheorem{thm}{Theorem}[section]
\newtheorem{cor}[thm]{Corollary}
\newtheorem{prop}[thm]{Proposition}
\newtheorem{lemma}[thm]{Lemma}
\newtheorem{fact}[thm]{Fact}
\theoremstyle{definition}
\newtheorem{defi}[thm]{Definition}
\theoremstyle{mystyle}
\theoremstyle{remark}
\newtheorem{rem}[thm]{Remark}
\newcommand{\monster}{\mathfrak C}
\DeclareMathOperator{\tp}{tp}
\title{A note on generically stable measures and $fsg$ groups}
\author{Ehud Hrushovski\thanks{Supported  by ISF}\\Hebrew University of Jerusalem \and Anand Pillay \thanks{Supported by EPSRC grants EP/F009712/1 and EP/I002294/1}\\ University of Leeds \and Pierre Simon \\ENS and Univ. Paris-Sud}
\begin{document}
\maketitle

\begin{abstract} We prove (Proposition 2.1) 
that if $\mu$ is a generically stable measure  in an $NIP$ theory, and $\mu(\phi(x,b)) = 0$ for all $b$ then for some $n$, 
$\mu^{(n)}(\exists y(\phi(x_{1},y)\wedge .. \wedge \phi(x_{n},y))) = 0$. As a consequence we show (Proposition 3.2) that if $G$ is a definable group with $fsg$ in an $NIP$ theory, and $X$ is a definable subset of $G$ then $X$ is generic if and only if every translate of $X$ does not fork over $\emptyset$, precisely as in stable groups, answering positively Problem 5.5 from \cite{NIP2}.

\end{abstract}

\section{Introduction and preliminaries}
This short paper is a contribution to the generalization of stability theory and stable group theory to $NIP$ theories, and also provides another example where we need to resort to measures to prove statements (about definable sets and/or types) which do not explicitly mention measures. The observations in the current paper can and will be used in the future to sharpen existing results around measure and $NIP$ theories (and this is why we wanted to record the observations here). Included in these sharpenings will be:  (i) replacing average types by generically stable types in a characterization of strong dependence in terms of measure and weight in \cite{strongdep}, and (ii) showing the existence of ``external generic types" (in the sense of Newelski \cite{Newelski}), over any model, for $fsg$ groups in $NIP$ theories, improving on Lemma 4.14 and related results from \cite{Newelski}. 

If $p(x)\in S(A)$ is a stationary type in a stable theory and $\phi(x,b)$ any formula, then we know that $\phi(x,b)\in p|\monster$ if and only if $\models \bigwedge_{i=1,,.n}\phi(a_{i},b)$ for some independent realizations $a_{1},..,a_{n}$ of $p$ (for some $n$ depending on $\phi(x,y)$). Hence $\phi(x,b)\notin p|\monster$ for all $b$ implies that
(and is clearly implied by) the inconsistency of $\bigwedge_{i=1,..,n}\phi(a_{i},y)$ for some (any) independent set 
$a_{1},..,a_{n}$ of realizations of $p$. This also holds for generically stable types in $NIP$ theories (as well as for generically stable types in arbitrary theories, with definition as in \cite{PT}). In \cite{strongdep}, an analogous result was proved for ``average measures" in strongly dependent theories. Here we prove it  (Proposition  2.1) for generically stable measures in arbitrary $NIP$ theories, as well as giving a generalization (Remark 2.2).  

The $fsg$ condition on a definable group $G$ is a kind of ``definable compactness" assumption, and in fact means precisely this in $o$-minimal theories and suitable theories of valued fields (and of course stable groups are $fsg$). Genericity of a definable subset $X$ of $G$ means that finitely many translates of $X$ cover $G$. Proposition 2.1 is used to show that for $X$ a definable subset of an $fsg$ group $G$, $X$ is generic if and only if every translate of $X$ does not fork over $\emptyset$. This is a somewhat striking extension of stable group theory to the $NIP$ environment.

We work with an $NIP$ theory $T$ and inside some monster model $\monster$. If $A$ is any set of parameters, let $L_x(A)$ denote the Boolean algebra of $A$-definable sets in the variable $x$. A \emph{Keisler measure} over $A$ is a finitely additive probability measure on $L_x(A)$. Equivalently, it is a regular Borel probability measure on the compact space $S_x(A)$. We will denote by $\mathfrak M_x(A)$ the space of Keisler measures over $A$ in the variable $x$. We might omit $x$ when it is not needed or when it is included in the notation of the measure itself ({\it e.g.} $\mu_x$).
If $X$ is a sort, or more generally definable set, we may also use notation such $L_{X}(A)$, $S_{X}(A)$, $\mathfrak M_{X}(A)$, where for example $S_{X}(A)$ denote the complete types over $A$ which contain the formula defining $X$  (or which ``concentrate on $X$"). 

\begin{defi}
A type $p\in S_x(A)$ is \emph{weakly random} for $\mu_{x}$ if $\mu(\phi(x))>0$ for any $\phi(x)\in L(A)$ such that $p\vdash \phi(x)$. A point $b$ is weakly random for $\mu$ over $A$ if $\tp(b/A)$ is weakly random for $\mu$.
\end{defi}

We briefly recall some definitions and properties of Keisler measures, referring the reader to \cite{NIP3} for more details.

If $\mu \in \mathfrak M_x(\monster)$ is a global measure and $M$ a small model, we say that $\mu$ is $M$-invariant if $\mu(\phi(x,a) \triangle \phi(x,a'))=0$ for every formula $\phi(x,y)$ and $a,a'\in \monster$ having the same type over $M$. Such a measure admits a Borel defining scheme over $M$: For every formula $\phi(x,y)$, the value $\mu(\phi(x,b))$ depends only on $\tp(b/M)$ and for any Borel $B\subset [0,1]$, the set $\{p\in S_y(M) : \mu(\phi(x,b))\in B \text{ for some }b\models p\}$ is a Borel subset of $S_y(M)$.

Let $\mu_x \in \mathfrak M(\monster)$ be $M$-invariant. If $\lambda_y\in \mathfrak M(\monster)$ is any measure, then we can define the \emph{invariant extension} of $\mu_x$ over $\lambda_y$, denoted $\mu_x \otimes \lambda_y$. It is a measure in the two variables $x,y$ defined in the following way. Let $\phi(x,y) \in L(\monster)$. Take a small model $N$ containing $M$ and the parameters of $\phi$. Define
$\mu_x \otimes \lambda_y (\phi(x,y)) = \int f(p) d\lambda_y,$ the integral ranging over $S_y(N)$
where $f(p) = \mu_x(\phi(x,b))$ for $b\in \monster$, $b\models p$ (this function is Borel by Borel definability). It is easy to check that this does not depend on the choice of $N$.

If $\lambda_y$ is also invariant, we can also form the product $\lambda_y \otimes \mu_x$. In general it will not be the case that $\lambda_y \otimes \mu_x=\mu_x \otimes \lambda_y$.

If $\mu_x$ is a global $M$-invariant measure, we define by induction: $\mu^{(n)}_{x_1...x_n}$ by $\mu^{(1)}_{x_1}=\mu_{x_1}$ and $\mu^{n+1}_{x_1...x_{n+1}} = \mu_{x_{n+1}} \otimes \mu^{(n)}_{x_1...x_n}$. We let $\mu^{(\omega)}_{x_1x_2...}$ be the union and call it the \emph{Morley sequence} of $\mu_x$.
\\

Special cases of $M$-invariant measures include definable and finitely satisfiable measures. A  global measure $\mu_x$ is \emph{definable} over $M$ if it is $M$-invariant and for every formula $\phi(x,y)$ and open interval $I\subset [0,1]$ the set $\{p\in S_y(M) : \mu(\phi(x,b))\in I \text{ for some }b\models p\}$ is open in $S_y(M)$. The measure $\mu$ is \emph{finitely satisfiable} in $M$ if $\mu(\phi(x,b))>0$ implies that $\phi(x,b)$ is satisfied in $M$. Equivalently, any weakly random type for $\mu$ is finitely satisfiable in $M$.

\begin{lemma} Let $\mu \in \mathfrak M_{x}(\monster)$ be definable over $M$, and $p(x)\in S_{x}(\monster)$ be weakly random for $\mu$. Let $\phi(x_{1},..,x_{n})$ be a formula over $\monster$. Suppose that 
$\phi(x_{1},..,x_{n})\in p^{(n)}$. Then $\mu^{(n)}(\phi(x_{1},..,x_{n})) > 0$.
\end{lemma}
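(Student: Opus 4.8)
The plan is to induct on $n$, keeping $\mu$ and $p$ fixed. The base case $n=1$ is immediate: if $\phi(x_1)\in p^{(1)}=p$ then $p\vdash\phi(x_1)$, so weak randomness of $p$ for $\mu$ gives $\mu(\phi)>0$. For the inductive step I assume the statement at level $n$ and take $\phi(x_1,\dots,x_{n+1})\in p^{(n+1)}$. Fix a small model $N\supseteq M$ containing the parameters of $\phi$, and recall that $\mu^{(n+1)}=\mu_{x_{n+1}}\otimes\mu^{(n)}_{x_1\dots x_n}$, so by the definition of $\otimes$,
\[
\mu^{(n+1)}(\phi)=\int_{S_{x_1\dots x_n}(N)} f(q)\,d\mu^{(n)},\qquad f(q)=\mu_{x_{n+1}}\bigl(\phi(b_1,\dots,b_n,x_{n+1})\bigr)\ \text{for}\ (b_1,\dots,b_n)\models q,
\]
where $f$ is well defined on $S_{x_1\dots x_n}(N)$ by $M$-invariance of $\mu$ together with the fact that the parameters of $\phi$ lie in $N$. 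It suffices to show this integral is strictly positive.

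The first observation is that $f$ is positive at the restricted Morley type. Since $\phi\in p^{(n+1)}$, for $\bar a=(a_1,\dots,a_n)\models p^{(n)}$ and $a_{n+1}\models p\,|\,N\bar a$ we have $\models\phi(\bar a,a_{n+1})$, so $\phi(\bar a,x_{n+1})$ lies in $p\,|\,N\bar a$ and hence in the global type $p$. Weak randomness of $p$ then yields $\mu_{x_{n+1}}(\phi(\bar a,x_{n+1}))>0$, that is, $f(\tp(\bar a/N))>0$ with $\tp(\bar a/N)=p^{(n)}\,|\,N$. Thus the set $U=\{q\in S_{x_1\dots x_n}(N): f(q)>0\}$ contains $p^{(n)}\,|\,N$, and $U$ is open because $\mu$ is definable over $M$ (this is exactly where definability, rather than mere invariance, is used: it makes $q\mapsto f(q)$ continuous).

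The main obstacle is that the single type $p^{(n)}\,|\,N$ may itself have $\mu^{(n)}$-measure zero, so positivity of $f$ at one point does not by itself force the integral to be positive. I would overcome this by combining openness of $U$ with the induction hypothesis. As $U$ is open and contains $p^{(n)}\,|\,N$, there is a formula $\theta(x_1,\dots,x_n)\in p^{(n)}$ over $N$ with the clopen set $[\theta]\subseteq U$; the induction hypothesis applied to $\theta$ gives $\mu^{(n)}(\theta)>0$, whence $\mu^{(n)}(U)>0$. Finally, writing $U=\bigcup_{k\ge 1}\{f>1/k\}$ as an increasing union of open (hence $\mu^{(n)}$-measurable) sets, continuity of the measure from below yields some $k$ with $\mu^{(n)}(\{f>1/k\})>0$, and then
\[
\mu^{(n+1)}(\phi)=\int f\,d\mu^{(n)}\ \ge\ \frac{1}{k}\,\mu^{(n)}\bigl(\{f>1/k\}\bigr)\ >\ 0,
\]
which closes the induction. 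The only points needing a careful check are the well-definedness of $f$ on $S_{x_1\dots x_n}(N)$ and the openness of $U$ and of the sets $\{f>1/k\}$, both of which follow directly from the definability hypothesis on $\mu$.
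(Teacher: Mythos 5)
Your proposal is correct and takes essentially the same approach as the paper: induction on $n$, with definability of $\mu$ used to produce a formula in $p^{(n)}$ whose clopen set lies where the fibre function $f$ is positive, then the induction hypothesis applied to that formula together with the integral formula for $\mu^{(n+1)}$. The only cosmetic difference is that the paper uses compactness to choose the formula $\psi$ so that $f > r-\epsilon$ uniformly on $[\psi]$, giving the lower bound $(r-\epsilon)\,\mu^{(n)}(\psi)$ directly, whereas you first conclude $\mu^{(n)}(\{f>0\})>0$ and then recover a uniform bound via continuity from below of the Borel measure --- both steps are valid and interchangeable.
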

\begin{proof} We will carry out the proof in the case where $\mu$ is definable (over $M$), which is anyway the case we need. Note that $p^{(m)}$ is $M$-invariant for all $m$. The proof of the lemma is by induction on $n$. For $n=1$ it is just the definition of weakly random. Assume true for $n$ and we prove for $n+1$. So suppose $\phi(x_{1},..,x_{n},x_{n+1}) \in p^{(n+1)}$. This means that for  $(a_{1},..,a_{n})$ realizing $p^{(n)}|M$, $\phi(a_{1},..,a_{n},x) \in p$. So as $p$ is weakly random for $\mu$, $\mu(\phi(a_{1},..,a_{n},x)) = r>0$. So as $\mu$ is $M$-invariant, $tp(a_{1}',..,a_{n}'/M) = tp(a_{1},..,a_{n}/M)$ implies $\mu(\phi(a_{1}',..,a_{n}',x)) = r$ and thus also
$r-\epsilon < \mu(\phi(a_{1}',..,a_{n}',x))$  for any small positive $\epsilon$. By definability of $\mu$ and compactness there is a formula $\psi(x_{1},..,x_{n}) \in tp(a_{1},..,a_{n}/A)$ such that 
$\models\psi(a_{1}',..,a_{n}')$ implies  $ 0 < r-\epsilon < \mu(\phi(a_{1}',..,a_{n}',x))$. By induction hypothesis, 
$\mu^{(n)}(\psi(x_{1},..,x_{n})) > 0$. So by definition of $\mu^{(n+1)}$ we have that $\mu^{(n+1)}(\phi(x_{1},..,x_{n},x_{n+1})) > 0$  as required. 
\end{proof}

A measure $\mu_{x_1,...,x_n}$ is \emph{symmetric} if for any permutation $\sigma$ of $\{1,...,n\}$ and any formula $\phi(x_1,...,x_n)$, we have $\mu(\phi(x_1,...,x_n))=\mu(\phi(x_{\sigma.1},...,x_{\sigma.n}))$. A special case of a symmetric measure is given by powers of a generically stable measure as we recall now. The following is Theorem 3.2 of \cite{NIP3}:

\begin{fact}\label{genstable}
Let $\mu_x$ be a global $M$-invariant measure. Then the following are equivalent:
\begin{enumerate}
\item $\mu_x$ is both definable and finitely satisfiable (necessarily over $M$),
\item $\mu^{(n)}_{x_1,...,x_n}|_M$ is symmetric for all $n<\omega$,
\item for any global $M$-invariant Keisler measure $\lambda_y$, $\mu_x \otimes \lambda_y=\lambda_y \otimes \mu_x$,
\item $\mu$ commutes with itself: $\mu_x \otimes \mu_y=\mu_y\otimes \mu_x$. 
\end{enumerate}
If $\mu_x$ satisfies one of those properties, we say it is \emph{generically stable}.
\end{fact}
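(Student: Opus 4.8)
The plan is to establish the four conditions equivalent by a cycle, noting first that the two implications into (4) are free: (4) is the case $n=2$ of (2) and the case $\lambda=\mu$ of (3). All the content lies in a single approximation principle relating $\mu$ to the empirical averages of its Morley sequence. In an $NIP$ theory this is furnished by the Vapnik--Chervonenkis theorem: for every $\phi(x,y)$ and $\epsilon>0$ there are $a_1,\dots,a_n$, each weakly random for $\mu$, with
\[
\Big|\,\mu(\phi(x,b))-\tfrac1n\#\{\,i:\ \models\phi(a_i,b)\,\}\,\Big|\le\epsilon
\]
for all $b$ in a fixed small model. I would make this the engine of the argument and organize the proof around the point that it upgrades to hold uniformly over \emph{all} $b\in\monster$ precisely when $\mu$ is generically stable.

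For $(1)\Rightarrow(3)$ fix an $M$-invariant $\lambda_y$ and a formula $\phi(x,y)$; I want $\mu_x\otimes\lambda_y(\phi)=\lambda_y\otimes\mu_x(\phi)$. The idea is to approximate \emph{both} measures by finite samples: choosing $a_1,\dots,a_n$ weakly random for $\mu$ and $b_1,\dots,b_m$ weakly random for $\lambda$, and integrating the uniform bound above against the opposite measure, both products collapse to the manifestly symmetric double average
\[
\mu_x\otimes\lambda_y(\phi)\ \approx\ \tfrac1{nm}\textstyle\sum_{i,j}\mathbf 1[\phi(a_i,b_j)]\ \approx\ \lambda_y\otimes\mu_x(\phi).
\]
The catch---and the main obstacle of the theorem---is that the $VC$ bound is uniform only over $b$ in a small model, whereas the integral against the \emph{global} measure $\lambda$ ranges over global $b$; for a merely invariant $\mu$ the two products really can differ, as the two end types of a dense linear order already show. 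This is exactly where both hypotheses in (1) are spent. Definability makes $b\mapsto\mu(\phi(x,b))$ a continuous, $M$-definable function of $\tp(b/M)$, and finite satisfiability makes every weakly random $a_i$ finitely satisfiable in $M$; together they force the averages of the Morley sequence to converge uniformly over all of $\monster$, which legitimizes the interchange and yields the displayed equalities for global $\lambda$.

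It remains to return from the measure-theoretic conditions to (1). For $(4)\Rightarrow(2)$ I would show by induction on $k$ that $\mu_x$ commutes with $\mu^{(k)}$, using associativity of $\otimes$ on $M$-invariant measures to isolate an adjacent pair of variables inside $\mu^{(n)}=\mu_{x_n}\otimes\cdots\otimes\mu_{x_1}$ and the self-commutation hypothesis to transpose that pair; since adjacent transpositions generate the symmetric group, $\mu^{(n)}|_M$ is symmetric for every $n$. For $(2)\Rightarrow(1)$, symmetry of all powers says exactly that the Morley sequence of $\mu$ is totally indiscernible over $M$, and from total indiscernibility together with $NIP$ one recovers the uniform convergence of the empirical averages. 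Reading off this limit gives both halves of (1): the level sets $\{\,b:\mu(\phi(x,b))>r\,\}$ are open because they are uniform limits of definable conditions, so $\mu$ is definable; and a formula of positive measure is realized along the Morley sequence and hence, by indiscernibility and invariance, already witnessed over $M$, so $\mu$ is finitely satisfiable. Like the Fubini step, this converse rests entirely on the $VC$ convergence, and I expect it, together with the uniform upgrade in $(1)\Rightarrow(3)$, to be where essentially all the work and all uses of $NIP$ are concentrated.
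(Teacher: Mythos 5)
You should note at the outset that the paper does not actually prove this Fact: it imports it verbatim as Theorem 3.2 of \cite{NIP3}, so your attempt has to be measured against the proof given there. Your outer architecture is broadly the right one and matches the literature in two places: $(4)\Rightarrow(2)$ by associativity of $\otimes$ for invariant measures plus adjacent transpositions is exactly the standard argument, and $(2)\Rightarrow(1)$ via a VC-type law of large numbers for symmetric (exchangeable) powers is in the spirit of Lemma 3.4 of \cite{NIP3}, which is quoted in this very paper as Proposition 1.2. (Two small inaccuracies: ``(4) is the case $n=2$ of (2)'' is not literal, since (2) asserts symmetry of the restrictions $\mu^{(n)}|_M$ while (4) is a global identity --- harmless, as your cycle only uses $(4)\Rightarrow(2)$; and in $(2)\Rightarrow(1)$ your one-line derivation of finite satisfiability \emph{in $M$} from ``realized along the Morley sequence'' needs an actual argument, since the Morley sequence lives in $\monster$, not in $M$.)

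The genuine gap is in $(1)\Rightarrow(3)$, which is the heart of the theorem. You correctly identify ``the catch'' --- the VC sampling bound is uniform only over $b$ in a small model, while $\lambda_y\otimes\mu_x(\phi)=\int\lambda(\phi(a,y))\,d\mu$ evaluates $\lambda$ on global instances --- but you then dispose of it by fiat: ``definability and finite satisfiability together force the averages to converge uniformly over all of $\monster$.'' That uniform self-approximation property is essentially equivalent to generic stability (it is one of the standard equivalents, usually extracted \emph{after} symmetry is known, via the VC lemma); asserting it at this point makes the step circular, and you give no mechanism for the upgrade. Moreover your symmetric treatment of $\lambda$ cannot be repaired: an arbitrary $M$-invariant $\lambda$ admits no finite sample accurate against monster parameters --- your own DLO example refutes it (for the invariant type at $+\infty$ and $\phi(x,y): y>x$, any finite $b_1,\dots,b_m$ fail against $a$ above all of them) --- so the collapse of $\lambda\otimes\mu$ to the double average $\frac1{nm}\sum_{i,j}\mathbf 1[\phi(a_i,b_j)]$ fails; and even granting a uniform bound for $\mu$ with witnesses in $M$, the integrand $p\mapsto\lambda(\phi(a,y))$ is merely Borel, not continuous, and empirical approximation of $\mu$ on definable sets does not control integrals of such functions. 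The proof in \cite{NIP3} avoids any sup-over-$\monster$ statement: the commutation of a definable-and-finitely-satisfiable measure with every invariant measure is proved by using definability to replace the \emph{continuous} function $b\mapsto\mu(\phi(x,b))$ by a definable partition of the $y$-space on which it is almost constant, with finite satisfiability exploited on weakly random types; and where uniformity over all of $\monster$ is genuinely needed, it is obtained by placing the quantifier inside the definable event being measured --- as in Proposition 1.2, where one bounds $\mu_{x_1,\dots,x_{2n}}(\exists y\,(|Fr(\phi(x,y),x_1,\dots,x_n)-Fr(\phi(x,y),x_{n+1},\dots,x_{2n})|>r))$ --- rather than by upgrading a small-model uniform bound, which is exactly the move your sketch relies on and cannot justify.
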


If $\mu\in \mathfrak M_x(A)$ and $D$ is a definable set such that $\mu(D)>0$, we can consider the \emph{localisation} of $\mu$ at $D$ which is a Keisler measure $\mu_D$ over $A$ defined by $\mu_D(X)=\mu(X\cap D)/\mu(X)$ for any definable set $X$.

We will use the notation $Fr(\theta(x),x_1,...,x_n)$ to mean $$\frac 1 n |\{i\in \{1,...,n\} : \models \theta(x_i)\}|.$$

The following is a special case of Lemma 3.4. of \cite{NIP3}.

\begin{prop}\label{genslemma}
Let $\phi(x,y)$ be a formula over $M$ and fix $r\in (0,1)$ and $\epsilon >0$.

Then there is $n$ such that for any symmetric measure $\mu_{x_1,...,x_{2n}}$, we have $$\mu_{x_1,...,x_{2n}}( \exists y (|Fr(\phi(x,y),x_1,....,x_n) - Fr(\phi(x,y),x_{n+1},...,x_{2n}) | > r)) \leq \epsilon.$$
\end{prop}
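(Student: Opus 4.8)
The plan is to run the classical Vapnik--Chervonenkis symmetrization argument \emph{inside} the Keisler measure, using that $T$ is $NIP$ to control the combinatorial complexity of $\phi$ and that $\mu$ is symmetric to carry out the symmetrization. Write $B$ for the definable set defined by $\exists y(|Fr(\phi(x,y),x_1,\dots,x_n)-Fr(\phi(x,y),x_{n+1},\dots,x_{2n})|>r)$; note that this really is a formula, since the condition on the two frequencies is a Boolean combination of the instances $\phi(x_i,y)$. Let $\Gamma$ be the group of $2^n$ permutations of $\{1,\dots,2n\}$ generated by the transpositions $(i\ \ n+i)$, each $\gamma\in\Gamma$ recorded by a sign vector $\epsilon(\gamma)\in\{-1,1\}^n$ indicating which pairs it swaps. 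Since $\mu$ is symmetric we have $\mu(\gamma^{-1}B)=\mu(B)$ for every $\gamma$, so that
$$\mu(B)=\frac{1}{|\Gamma|}\sum_{\gamma\in\Gamma}\mu(\gamma^{-1}B)=\int \frac{1}{|\Gamma|}\sum_{\gamma\in\Gamma}\mathbf 1_{\gamma^{-1}B}\,d\mu,$$
the interchange being legitimate because the integrand is a finite sum of indicators of definable sets.

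Next I would bound that integrand pointwise, uniformly over all complete types. Fix a realization $a=(a_1,\dots,a_{2n})$ and a parameter $b$, and set $\delta_i=\mathbf 1[\phi(a_i,b)]-\mathbf 1[\phi(a_{n+i},b)]\in\{-1,0,1\}$. A direct computation shows that, abbreviating the two frequencies of the $\gamma$-permuted tuple by $Fr_1$ and $Fr_2$, one has $Fr_1-Fr_2=\frac1n\sum_{i\le n}\epsilon_i(\gamma)\delta_i$. Thus, as $\gamma$ ranges uniformly over $\Gamma$, the $\epsilon_i(\gamma)$ become independent Rademacher signs and Hoeffding's inequality gives, for this fixed $b$,
$$\frac{1}{|\Gamma|}\Big|\Big\{\gamma:\big|\tfrac1n\textstyle\sum_{i\le n}\epsilon_i(\gamma)\delta_i\big|>r\Big\}\Big|\le 2\exp(-nr^2/2).$$
The event $\gamma a\in B$ asks for \emph{some} $b$, but the vector $(\delta_i)_i$ depends on $b$ only through the trace $\{i:\models\phi(a_i,b)\}\subseteq\{1,\dots,2n\}$. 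Here is where $NIP$ enters: the family of sets $\{a:\models\phi(a,b)\}$ has finite VC dimension $d$, so by the Sauer--Shelah lemma the number of distinct such traces, as $b$ varies, is at most $\sum_{j\le d}\binom{2n}{j}$, a polynomial in $n$. A union bound over these polynomially many possibilities yields the pointwise estimate
$$\frac{1}{|\Gamma|}\big|\{\gamma:\gamma a\in B\}\big|\le 2\Big(\sum_{j\le d}\binom{2n}{j}\Big)\exp(-nr^2/2),$$
valid for every $a$, and hence for every complete type in the sense of the averaged indicator above.

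Integrating this uniform bound against $\mu$ gives $\mu(B)\le 2\big(\sum_{j\le d}\binom{2n}{j}\big)\exp(-nr^2/2)$. Since the right-hand side is a fixed polynomial in $n$ times a geometrically decaying factor, it tends to $0$ as $n\to\infty$; choosing $n=n(\phi,r,\epsilon)$ large enough that it drops below $\epsilon$ finishes the argument. Crucially, this $n$ depends only on $\phi$ (through $d$), on $r$, and on $\epsilon$, and not on the particular symmetric measure $\mu$, which is exactly the uniformity demanded by the statement.

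The step I expect to be the main obstacle is the passage from the purely combinatorial and probabilistic pointwise bound to a statement about $\mu$: one must keep every event in sight definable so that $\mu$ is actually defined on it, justify the interchange of the finite group-average with the (only finitely additive) integral via symmetry of $\mu$, and ensure that the Sauer--Shelah count is genuinely uniform in the realization $a$. The two analytic inputs, Hoeffding's concentration inequality and the Sauer--Shelah polynomial bound, are standard; the care lies entirely in threading them through the Keisler measure while respecting definability.
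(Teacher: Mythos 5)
Your proof is correct and is essentially the intended one: the paper does not prove Proposition \ref{genslemma} itself but quotes it as a special case of Lemma 3.4 of \cite{NIP3}, where the argument is exactly the two-sample Vapnik--Chervonenkis symmetrization you carried out, the key point in both being that the classical random-swap/Hoeffding/Sauer--Shelah bound uses only exchangeability of the sample, which symmetry of the Keisler measure supplies in place of independence. Your handling of the finitely additive subtleties (definability of every event, the interchange of the finite group average with the integral, and the first-order uniformity of the pointwise trace bound over all realizations) is sound, and the resulting $n$ depends only on $\phi$, $r$, $\epsilon$, matching the uniformity in $\mu$ required by the statement.
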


\section{Main result}

\begin{prop}
Let $\mu_x$ be a global generically stable measure. Let $\phi(x,y)$ be any formula in $L(\monster)$. Suppose that $\mu(\phi(x,b))=0$ for all $b\in \monster$. Then there is $n$ such that $\mu^{(n)}( \exists y (\phi(x_1,y) \wedge ... \wedge \phi(x_n,y)))=0$.

Moreover, $n$ depends only on $\phi(x,y)$ and not on $\mu$.
\end{prop}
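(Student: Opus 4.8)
The plan is to pass to a small model $M$ that contains the (finitely many) parameters of $\phi$ and over which $\mu$ is definable and finitely satisfiable; by Fact~\ref{genstable} every power $\mu^{(k)}|_M$ is then symmetric, and enlarging $M$ preserves generic stability. Write $D_n$ for the $M$-formula $\exists y(\phi(x_1,y)\wedge\dots\wedge\phi(x_n,y))$, equivalently $\exists y(Fr(\phi(x,y),x_1,\dots,x_n)=1)$; the goal is $\mu^{(n)}(D_n)=0$ with $n$ depending only on $\phi$. Two elementary observations drive everything. First, the events are nested through the marginals: since $\mu^{(m)}$ restricted to any $n$ coordinates is $\mu^{(n)}$ and $D_m$ entails $D_n$ on those coordinates, the sequence $\mu^{(m)}(D_m)$ is non-increasing in $m$. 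Second, the hypothesis $\mu(\phi(x,b))=0$ for all $b\in\monster$ says, via the coordinate marginals and a union bound, that for each \emph{fixed} $b$ one has $\mu^{(k)}(Fr(\phi(x,b),x_1,\dots,x_k)>0)=0$; that is, a fixed $\phi(\cdot,b)$ almost surely captures none of a $\mu^{(k)}$-random tuple.

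The core step is a symmetrization against a fresh independent copy, engineered so that Proposition~\ref{genslemma} applies. Work with $\mu^{(2n)}$ and split the variables into a first batch $x_1,\dots,x_n$ and a second batch $x_{n+1},\dots,x_{2n}$; write $Fr_1(y),Fr_2(y)$ for the two batch-frequencies of $\phi(x,y)$. Let $E_1=\{\exists y\,Fr_1(y)=1\}$, whose $\mu^{(2n)}$-measure is $\mu^{(n)}(D_n)$ by the marginal property, and let $A=\{\exists y\,|Fr_1(y)-Fr_2(y)|>1/2\}$. I claim $\mu^{(n)}(D_n)\le\mu^{(2n)}(A)$. Indeed, for $\mu^{(n)}$-almost every first batch lying in $E_1$, fix a witness $b$ with $Fr_1(b)=1$; by the fixed-$b$ fact applied to the independent second batch through the product (Fubini) structure of $\mu^{(2n)}$, one has $Fr_2(b)=0$ for almost all second batches, whence $|Fr_1(b)-Fr_2(b)|=1>1/2$ and $A$ holds. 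Integrating over the first batch gives the claim. Now apply Proposition~\ref{genslemma} to the symmetric measure $\mu^{(2n)}|_M$ with $r=1/2$ and a chosen $\epsilon$: for all $n\ge n(\phi,1/2,\epsilon)$ we get $\mu^{(2n)}(A)\le\epsilon$, and hence $\mu^{(n)}(D_n)\le\epsilon$. Crucially the threshold $n$ comes from Proposition~\ref{genslemma} and depends only on $\phi$ and $\epsilon$, not on $\mu$, which is exactly the uniformity asserted in the ``moreover''; combined with monotonicity it already yields $\lim_m\mu^{(m)}(D_m)=0$.

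The main obstacle is upgrading ``arbitrarily small'' to ``equal to $0$ for a single finite $n$ depending only on $\phi$'': the soft bound only gives $\mu^{(n(\epsilon))}(D_{n(\epsilon)})\le\epsilon$ with $n(\epsilon)\to\infty$, and monotonicity pushes smallness the wrong way (to larger $m$), so it does not by itself locate a finite $n$ at which the measure vanishes. I expect the genuine content here to force genericity and $NIP$ beyond the concentration estimate. The route I would pursue is via weakly random types: $\mu^{(n)}(D_n)=0$ iff no global type weakly random for $\mu^{(n)}$ contains $D_n$, and each coordinate of such a type is weakly random for $\mu$ over $\monster$, so a tuple realizing it would consist of mutually $\mu$-generic points sharing a single $\phi$-witness. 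The delicate point, and the reason the elementary fixed-$b$ fact does not immediately close the argument, is that such a realization and its witness $b$ live outside $\monster$, so $\mu(\phi(x,b))=0$ cannot be invoked for the witness directly. Reconciling this must route through finite satisfiability and definability of $\mu$ together with the finite $VC$ data packaged in Proposition~\ref{genslemma} --- which is precisely what allows $n$ to be taken uniform in $\phi$ --- and I anticipate this reconciliation, rather than the symmetrization, to be the technical heart of the proof.
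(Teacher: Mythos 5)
Your first two paragraphs are correct, and they are essentially the first half of the paper's argument: the paper also passes to a small model $M$, works with $\mu^{(2n)}$ split into two batches, and invokes Proposition~\ref{genslemma} with $r=1/2$; your claim $\mu^{(n)}(D_n)\le \mu^{(2n)}(A)$ is sound, since the first batch and its witness $b$ are realized inside $\monster$, where $\mu(\phi(x,b))=0$ applies, and the coordinate marginals of $\mu^{(n)}$ are indeed $\mu$. But as you yourself flag, this only yields $\mu^{(n)}(D_n)\le\epsilon$ with $n=n(\phi,1/2,\epsilon)\to\infty$, i.e.\ $\lim_m \mu^{(m)}(D_m)=0$, which is strictly weaker than the proposition. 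So there is a genuine gap, and your closing paragraph does not close it; worse, your diagnosis points in the wrong direction. The obstruction is not that weakly random realizations and their witnesses live outside $\monster$ --- the paper's proof never leaves $\monster$ and needs no reconciliation via finite satisfiability for external witnesses.

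The missing idea is a localization, a small twist on what you already have. Suppose toward a contradiction that $\mu^{(2n)}(W_{2n})>0$ for the single $n$ given by Proposition~\ref{genslemma} at $r=\epsilon=1/2$. Since $W_{2n}$ is symmetric in the $2n$ variables, the localized measure $(\mu^{(2n)})_{W_{2n}}$ is still symmetric, so Proposition~\ref{genslemma} applies to \emph{it} and gives that the event $A$ has relative measure at most $1/2$ on $W_{2n}$; hence the formula $\chi \equiv \bigl( W_{2n} \wedge \forall y\, (|Fr(\phi(x,y),x_1,\dots,x_n)-Fr(\phi(x,y),x_{n+1},\dots,x_{2n})|\le 1/2)\bigr)$ has \emph{positive} $\mu^{(2n)}$-measure --- note the universal quantifier, which your event $A$ lacks and which is exactly what localization buys. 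Now run your Fubini decomposition on $\chi$ rather than $A$: find a first-batch type $q$ with $\mu^{(n)}(\chi(q,\cdot))>0$, realize $(a_1,\dots,a_n)\models q$ in $\monster$, take a witness $b\in\monster$ with $\bigwedge_{i\le n}\phi(a_i,b)$, and \emph{then} choose the second batch $(a_{n+1},\dots,a_{2n})$ weakly random for $\mu^{(n)}$ over $Ma_1\dots a_n b$ and satisfying $\chi$ (possible since $\chi(q,\cdot)$ has positive measure). Because the second batch is chosen after $b$ and weakly random over it, each $a_j$, $j>n$, satisfies $\neg\phi(a_j,b)$ (as $\mu(\phi(x,b))=0$), so the two frequencies at $y=b$ differ by $1$, contradicting the $\forall y$ clause of $\chi$. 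This yields $\mu^{(2n)}(W_{2n})=0$ outright at the single $n$ from Proposition~\ref{genslemma}, giving both the vanishing and the uniformity in $\mu$; the ordering of choices (witness first, second batch weakly random over it) replaces the epsilon-chasing your version gets stuck on.
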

\begin{proof}
Let $\mu_x$ be a global generically stable measure and $M$ a small model over which $\phi(x,y)$ is defined and such that $\mu_x$ is $M$-invariant. Assume that $\mu(\phi(x,b))=0$ for all $b\in \monster$. For any $k$, define $$W_k = \{(x_1,...,x_n) : \exists y (\wedge_{i=1..k} \phi(x_i,y))\}.$$ This is a definable set. We want to show that $\mu^{(n)}(W_n)=0$ for $n$ big enough. Assume for a contradiction that this is not the case.

Let $n$ be given by Proposition \ref{genslemma} for $r=1/2$ and $\epsilon=1/2$. Consider the measure $\lambda_{x_1,...,x_{2n}}$ over $M$ defined as being equal to $\mu^{(2n)}$ localised on the set $W_{2n}$ (by our assumption, this is well defined). As the measure $\mu^{(2n)}$ is symmetric and the set $W_{2n}$ is symmetric in the $2n$ variables, the measure $\lambda$ is symmetric. Let $\chi(x_1,...,x_{2n})$ be the formula ``$(x_1,...,x_{2n}) \in W_{2n} \wedge \forall y (|Fr(\phi(x,y),x_1,...,x_n)-Fr(\phi(x,y),x_{n+1},...,x_{2n})| \leq 1/2)$". By definition of $n$, we have
$\lambda ( \exists y (|Fr(\phi(x,y),x_1,....,x_n) - Fr(\phi(x,y),x_{n+1},...,x_{2n}) | > 1/2)) \leq 1/2$. Therefore $\mu^{(2n)} (\chi(x_1,...,x_{2n})) >0$.

As $\mu$ is $M$-invariant, we can write

$$\mu^{(2n)}(\chi(x_1,...,x_{2n})) = \int_{q \in S_{x_1,...,x_n}(M)} \mu^{(n)} (\chi(q,x_{n+1},...,x_{2n}))d\mu^{(n)},$$

where $\mu^{(n)} (\chi(q,x_{n+1},...,x_{2n}))$ stands for $\mu^{(n)}(\chi(a_1,...,a_n,x_{n+1},...,x_{2n}))$ for some (any) realization $(a_1,...,a_n)$ of $q$. As $\mu^{(2n)}(\chi(x_1,...,x_{2n})) >0$, there is $q\in S_{x_1,...,x_n}$ such that \\(*) $\mu^{(n)}(\chi(q,x_{n+1},...,x_{2n}))>0$.\\Fix some $(a_1,...,a_n)\models q$. By (*), we have $(a_1,...,a_n) \in W_n$. So let $b\in \monster$ such that $\models \bigwedge_{i=1...n} \phi(a_i,b)$. Again by (*), we can find some $(a_{n+1},...,a_{2n})$ weakly random for $\mu^{(n)}$ over $Ma_1...a_nb$ and such that
\\ (**) $\models \chi(a_1,...,a_n,a_{n+1},...,a_{2n})$.
\\ In particular, for  $j=n+1,...,2n$, $a_j$ is weakly random for $\mu$ over $Mb$ hence $\models \neg \phi(a_j,b)$. But then $|Fr(\phi(x,b);a_1,...,a_n) - Fr(\phi(x,b);a_{n+1},...,a_{2n})| =1$. This contradicts (**).
\end{proof}

\begin{rem} The proof above adapts to showing the following generalization:
\newline
Let $\mu_{x}$ be a global generically stable measure, $\phi(x,y)$ a formula in $L(\monster)$. Let $\Sigma(x)$ be the partial type  (over the parameters in $\phi$ together with a small model over which $\mu$ is definable) defining $\{b:\mu(\phi(x,b)) = 0\}$. Then for some $n$: $\mu^{(n)}(\exists y(\Sigma(y) \wedge \phi(x_{1},y)\wedge .. \wedge\phi(x_{n},y))) = 0$.
\end{rem}

\section{Generics in $fsg$ groups }
Let $G$ be a definable group, without loss defined over $\emptyset$. We call a definable subset $X$ of $G$ left (right) generic if finitely many left (right) translates of $X$ cover $G$, and a type $p(x)\in S_{G}(A)$ is left (right) generic if every formula in $p$ is. We originally defined (\cite{NIP1}) $G$ to have ``finitely satisfiable generics", or to be $fsg$, if there is some global complete type $p(x)\in S_{G}(\monster)$ of $G$ every left $G$-translate of which is finitely satisfiable in some fixed small model $M$. 

The following summarizes the situation, where the reader is referred to Proposition 4.2 of \cite{NIP1} for (i) and Theorem 7.7 of \cite{NIP2} and Theorem 4.3 of \cite{NIP3} for (ii), (iii), and (iv). 

\begin{fact} Suppose $G$ is an $fsg$  group. Then
\newline
(i) A definable subset $X$ of $G$ is left generic iff it is right generic, and the family of nongeneric definable sets is a (proper) ideal of the Boolean algebra of definable subsets of $G$,
\newline
(ii) There is a left $G$-invariant  Keisler measure $\mu\in \mathfrak M_G(\monster)$ which is generically stable,
\newline
(iii) Moreover $\mu$ from (ii) is the unique left $G$-invariant global Keisler measure on $G$ as well as the unique right $G$-invariant global Keisler measure on $G$,
\newline
(iv) Moreover $\mu$ from (ii) is {\em generic} in the sense that for any definable set $X$, $\mu(X) > 0$ iff $X$ is generic.
\end{fact}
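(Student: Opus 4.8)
The plan is to take the $fsg$ hypothesis as the only genuine input and to establish the four clauses in the order (ii), (iii), (iv), (i), since each later clause leans on the earlier ones. Recall that $fsg$ gives a single global type $p$ all of whose left $G$-translates are finitely satisfiable in a fixed small model $M$. The heart of the matter is (ii), the construction of a well-behaved invariant measure; once that is in hand the remaining clauses are largely bookkeeping with the product operation $\otimes$ and the localisation machinery.

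For (ii) I would first produce a global left-$G$-invariant Keisler measure $\mu$. Since $G$ need not be amenable I cannot simply average a point mass over $G$; instead I would exploit $NIP$: the definable group $G$ has a smallest type-definable subgroup $G^{00}$ of bounded index, and $G/G^{00}$, with the logic topology, is a compact Hausdorff topological group, hence carries a two-sided invariant Haar probability measure $h$. Using the $fsg$ type $p$, whose left-translates are all finitely satisfiable in $M$, one checks that the fibres of the quotient map $G\to G/G^{00}$ are measured consistently, so that $h$ can be pulled back to a finitely additive $\mu\in\mathfrak M_G(\monster)$ that is finitely satisfiable in $M$; unimodularity of the compact group $G/G^{00}$ makes $\mu$ two-sided (bi-)invariant. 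One then verifies, using $NIP$ and the explicit construction, that $\mu$ is moreover definable over $M$; being both definable and finitely satisfiable, $\mu$ is generically stable by Fact~\ref{genstable}. This step is the main obstacle: everything hinges on turning the single $fsg$ type into genuine two-sided mass, and this is exactly where $fsg$ (and not merely definable amenability) is used.

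For (iii) I would argue by a Fubini/commutation computation. Let $\nu$ be any global left-invariant measure; it is invariant over some small model, so both products $\mu_x\otimes\nu_y$ and $\nu_y\otimes\mu_x$ are defined, and since $\mu$ is generically stable they coincide by Fact~\ref{genstable}(3). Pushing both measures forward under the $\emptyset$-definable multiplication $(x,y)\mapsto xy$ and testing against a definable $X\subseteq G$ yields $\int \mu(Xb^{-1})\,d\nu(b)=\int \nu(a^{-1}X)\,d\mu(a)$. The right integrand equals $\nu(X)$ by left-invariance of $\nu$, while the left integrand equals $\mu(X)$ because $\mu$ is right-invariant (from (ii)); hence $\nu=\mu$. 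Running the symmetric computation against a right-invariant $\nu$ gives uniqueness on the right as well, so $\mu$ is the unique invariant measure on each side.

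For (iv) the easy direction is that if $X$ is left generic, say $G=\bigcup_{i\le k} g_iX$, then $1=\mu(G)\le\sum_i \mu(g_iX)=k\,\mu(X)$ by left-invariance, whence $\mu(X)\ge 1/k>0$. The converse, that $X$ non-generic forces $\mu(X)=0$, is the technical half: if no finite union of translates covers $G$ then the complements $\{g(G\setminus X):g\in G\}$ have the finite intersection property, and using that $\mu$ is finitely satisfiable in $M$ (equivalently, that its weakly random types are finitely satisfiable in $M$) one shows that $X$ can carry no positive mass. Granting (iv), clause (i) is immediate: $X$ is left generic iff $\mu(X)>0$ iff, by bi-invariance and the symmetric covering estimate, $X$ is right generic; and since $\mu(G)=1$ while finite additivity makes $\mu(X\cup Y)>0$ force $\mu(X)>0$ or $\mu(Y)>0$, the non-generic sets are closed downward and under finite unions and omit $G$, i.e.\ form a proper ideal.
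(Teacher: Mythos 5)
You should first note that the paper itself does not prove this Fact: it is imported wholesale, with (i) referred to Proposition 4.2 of \cite{NIP1} and (ii)--(iv) to Theorem 7.7 of \cite{NIP2} and Theorem 4.3 of \cite{NIP3}, so your proposal is really a reconstruction of several papers' worth of work. Your architecture ((ii) $\to$ (iii) $\to$ (iv) $\to$ (i)) is broadly sound, and two of your ingredients do match the literature: the measure does originate from Haar measure $h$ on the compact group $G/G^{00}$ via a generic type, and uniqueness is a Fubini-type computation. But both load-bearing steps have genuine gaps. In (ii), ``pulling back $h$ along $G \to G/G^{00}$'' is not a construction: definable sets are not unions of fibres of the quotient map, and the actual definition is $\mu(X) = h\bigl(\{\bar g \in G/G^{00} : X \in g\cdot p\}\bigr)$ for a generic type $p$, where the real work --- occupying a substantial part of \cite{NIP1} --- is showing that this set is Haar-measurable at all (this uses $NIP$ essentially) and that the result is well defined and left invariant. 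Definability of $\mu$ over $M$ is then a further theorem, not something one ``verifies from the explicit construction''. Worse, your route to right-invariance via unimodularity of $G/G^{00}$ is a non sequitur: the set attached to $X\cdot g$ is $\{\bar h : X \in h\cdot p\cdot g^{-1}\}$, i.e.\ right translation of $X$ perturbs the \emph{type} $p$, not the position of the set inside $G/G^{00}$. In the actual development, right-invariance of $\mu$ is \emph{deduced from uniqueness}: once one knows $\mu$ is the unique left-invariant measure, $X \mapsto \mu(X\cdot g)$ is left invariant, hence equals $\mu$; so the dependency runs in the opposite direction to yours.

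The decisive gap is in (iii): you assert that an arbitrary left-invariant $\nu$ ``is invariant over some small model''. Left $G$-invariance is invariance under translations, not under automorphisms of $\monster$ fixing a small $M$, and no small base of invariance is available for a general $\nu$; without it the product $\nu_y \otimes \mu_x$ is not even defined, so Fact \ref{genstable}(3) cannot be invoked, and your Fubini computation has no legitimate left-hand side. The known proof replaces this with the uniform approximation of a generically stable measure by frequencies: for each $\phi(x,y)$ and $\epsilon$ there are $a_1,\dots,a_n$ with $|\mu(\phi(x,b)) - Fr(\phi(x,b), a_1,\dots,a_n)| \le \epsilon$ for \emph{all} $b$; then $\nu(X) = \frac{1}{n}\sum_i \nu(a_i^{-1}X)$ is within $\epsilon$ of $\int \mu(Xb^{-1})\,d\nu(b)$ with no hypothesis on $\nu$ whatsoever, and right-invariance of $\mu$ finishes --- though note this again consumes the bi-invariance you did not legitimately establish in (ii). By contrast, your (iv) and (i) are essentially recoverable once (ii)--(iii) are secured: the covering estimate is the standard easy direction, and the hard direction can be made concrete --- if $\mu(X) > 0$ then invariance plus finite satisfiability gives that every translate $gX$ meets $G(M)$, so $G = \bigcup_{m \in G(M)} m\cdot X^{-1}$, and saturation extracts a finite subcover; your ``finite intersection property'' remark gestures at this but omits the compactness step. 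As it stands, the proposal is a correct road map whose two hard steps (measurability/definability and bi-invariance in (ii), commutation against an arbitrary $\nu$ in (iii)) are asserted rather than proved.
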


Remember that a definable set $X$ (or rather a formula $\phi(x,b)$ defining it) forks over a set $A$ if $\phi(x,b)$ implies a finite disjunction of formulas $\psi(x,c)$ each of which divide over $A$, and  $\psi(x,c)$ is said to divide over $A$ if for some $A$-indiscernible sequence $(c_{i}:i<\omega)$ with $c_{0} = c$, $\{\phi(x,c_{i}):i<\omega\}$ is inconsistent. 

\begin{prop} Suppose $G$ is $fsg$ and $X\subseteq G$ a definable set. Then $X$ is generic if and only if for all $g\in X$, $g\cdot X$ does not fork over $\emptyset$ (if and only if for all $g\in G$, $X\cdot g$ does not fork over $\emptyset$). 
\end{prop}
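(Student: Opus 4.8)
The plan is to prove both directions separately, using Fact 3.1 to translate between genericity and positive measure, and then using Proposition 2.1 to bridge measure-zero conditions and forking.

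First I would dispose of the easy direction. Suppose $X$ is generic. By Fact 3.1(i), genericity is preserved under translation and the nongeneric sets form a proper ideal, so every translate $gX$ is again generic. It then suffices to show that a generic definable set cannot fork over $\emptyset$. This is standard for $fsg$ groups: by Fact 3.1(ii)--(iv) there is a left $G$-invariant generically stable measure $\mu$ with $\mu(Y)>0$ iff $Y$ is generic. Since $\mu$ is $\emptyset$-invariant (indeed $G$-invariant and definable over any small model), any formula of positive $\mu$-measure cannot divide over $\emptyset$: an $\emptyset$-indiscernible witnessing sequence would give infinitely many translates each of measure $\mu(gX) = \mu(X) > 0$ by invariance, contradicting finite additivity since their intersection is empty (inconsistency). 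Forking reduces to dividing here via the same measure bound, so $gX$ does not fork over $\emptyset$.

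For the converse, the plan is to argue by contrapositive: assume $X$ is \emph{not} generic and produce some $g\in X$ with $g\cdot X$ forking over $\emptyset$. By Fact 3.1(iv), $X$ nongeneric means $\mu(X)=0$, where $\mu$ is the invariant generically stable measure. The key move is to apply Proposition 2.1. Set $\phi(x,y)$ to be the formula $y^{-1}\cdot x \in X$, i.e. $x \in y\cdot X$; then $\mu(\phi(x,b)) = \mu(b\cdot X) = \mu(X) = 0$ for every $b$, using left-invariance of $\mu$. Proposition 2.1 then yields an $n$ (depending only on $\phi$) with $\mu^{(n)}(\exists y(\phi(x_1,y)\wedge\cdots\wedge\phi(x_n,y)))=0$, i.e. the set of $n$-tuples lying in a common left translate of $X$ is $\mu^{(n)}$-null. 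Unwinding, if $(a_1,\dots,a_n)$ is a Morley sequence of $\mu$ (weakly random for $\mu^{(n)}$), there is no single $b$ with all $a_i \in b\cdot X$; equivalently the translates $a_1^{-1}X, a_2^{-1}X,\dots$ along such a sequence have empty common intersection after finitely many steps.

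To convert this into forking of a single translate, I would build an $\emptyset$-indiscernible sequence witnessing division. The natural candidate is a Morley sequence $(a_i : i<\omega)$ of $\mu$ over $\emptyset$; since $\mu$ is generically stable and $\emptyset$-invariant, this sequence is indiscernible over $\emptyset$ and is totally indiscernible by symmetry (Fact 2.6(2)). Fix $g\in X$ and consider the formula $\psi(x,c)$ asserting $x \in c^{-1}\cdot(g\cdot X)$ for an appropriate parameter; the measure-zero statement says that along the Morley sequence the translates $\{g\cdot X \cdot a_i\}$ (or the appropriately set-up family) are $n$-inconsistent, hence $\omega$-inconsistent by indiscernibility, which is exactly dividing of $g\cdot X$ over $\emptyset$. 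The main obstacle, and the step needing the most care, is precisely this last conversion: Proposition 2.1 gives a measure-zero conclusion about Morley sequences of $\mu$, whereas dividing requires an \emph{inconsistent} family indexed by an $\emptyset$-indiscernible sequence with the translating parameter as its base point. I would handle this by choosing the indiscernible sequence to be a Morley sequence in the generically stable measure so that ``null along the sequence'' upgrades to genuine $n$-inconsistency (a measure-zero set contains no realization weakly random for $\mu^{(n)}$), and by using the symmetry of generically stable Morley sequences to place the right translate of $X$ at the base point $c_0$ of the dividing sequence. Identifying the correct $g\in X$ (rather than an arbitrary group element) comes from the fact that $\exists y\,\phi$ over a weakly random tuple forces the witness $b$ to satisfy $a_i \in b\cdot X$, so the relevant translate genuinely meets $X$; the right-translate version follows symmetrically by replacing $\mu$ with its behaviour under right multiplication, using Fact 3.1(iii) that $\mu$ is also the unique right-invariant measure.
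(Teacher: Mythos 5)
Your overall strategy coincides with the paper's: handle left-to-right by showing that sets of positive $\mu$-measure cannot divide (the paper simply cites Proposition 5.12 of \cite{NIP2}; your direct argument is fine in spirit, though the $\emptyset$-invariance of $\mu$ should be justified by uniqueness, Fact 3.1(iii), which makes $\mu$ invariant under all automorphisms of the monster --- ``$G$-invariant and definable over any small model'' does not by itself give this), and handle right-to-left by feeding the measure-zero translates into Proposition 2.1. But two steps of your right-to-left argument are genuinely flawed. First, bookkeeping: with your choice $\phi(x,y)\equiv x\in y\cdot X$, the formula that ends up dividing is $\phi(a_i,y)$, which read in the variable $y$ defines $\{y : a_i\in y\cdot X\}=a_i\cdot X^{-1}$ (not $a_i^{-1}\cdot X$, as you write when ``unwinding''), so you produce a forking translate of $X^{-1}$, not of $X$. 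The paper instead takes $\phi(x,y)\equiv y\in x\cdot X$, so that $\phi(a_1,y)$ literally defines $a_1\cdot X$; the price is that $\phi(x,b)$ then defines $b\cdot X^{-1}$, and one needs Fact 3.1(i) (left generic iff right generic) to see that $X^{-1}$ is also nongeneric, whence $\mu(\phi(x,b))=0$. Your version is repairable by running the whole argument with $X^{-1}$ in place of $X$, but as written the conclusion concerns the wrong family of sets.

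Second, and more seriously, the dividing witness. A ``Morley sequence of $\mu$'' is not a sequence of points, and a sequence built by taking successively weakly random points need not be indiscernible ($\mu$ may have many weakly random types, and nothing forces successive choices to cohere), so the assertion ``since $\mu$ is generically stable and $\emptyset$-invariant, this sequence is indiscernible over $\emptyset$'' is unjustified; the ``Fact 2.6(2)'' you cite does not exist in this paper. The mechanism the paper uses, and which you need, is: fix a single global type $p$ weakly random for $\mu$; every formula in $p$ has positive measure, hence is generic, so $p$ is a global generic type, hence does not fork over the chosen small model $M_0$ and is therefore (by NIP) $M_0$-invariant; the Morley sequence of $p$ over $M_0$ is then $M_0$-indiscernible (so $\emptyset$-indiscernible), and Lemma 1.2 --- which is exactly your parenthetical claim that ``a measure-zero set contains no weakly random realization'', but is an actual lemma requiring definability of $\mu$ and an induction, not a triviality --- shows that $p^{(n)}$ omits the $\mu^{(n)}$-null set, yielding $n$-inconsistency of $\{\phi(a_i,y): i<\omega\}$ along an indiscernible sequence, hence dividing over $M_0$ and so over $\emptyset$. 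Finally, your effort to arrange $g\in X$ chases what is evidently a typo in the statement (compare the abstract and the parenthetical ``for all $g\in G$'' clause): the element produced, $g=a_1$, is weakly random for $\mu$ while $\mu(X)=0$, so in fact $a_1\notin X$; and your justification (that the witness $b$ satisfies $a_i\in b\cdot X$) concerns $b$, not the translating element, so it establishes nothing about membership of $g$ in $X$.
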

\begin{proof}  Left to right: It suffices to prove that any generic definable set $X$ does not fork over $\emptyset$, and as the set of nongenerics forms an ideal it is enough to prove that any generic definable set does not divide over $\emptyset$. This is carried out in (the proof of) Proposition 5.12 of \cite{NIP2}. 

\vspace{2mm}
\noindent
Right to left:  Assume that $X$ is nongeneric. We will prove that for some $g\in G$, $g\cdot X$ divides over $\emptyset$ (so also forks over $\emptyset$). 

Let $\mu_{x}$ be the generically stable $G$-invariant global Keisler measure given by Fact 3.1. Let $M_{0}$ be a small model such that $\mu$ does not fork over $M_{0}$ (namely, as $\mu$ is generic, every generic formula does not fork over $M_{0}$) and $X$ is definable over $M_{0}$. Let $\phi(x,y)$ denote the formula defining $\{(x,y)\in G\times G: y\in x\cdot X\}$. So 
$\phi$ has additional (suppressed) parameters from $M_{0}$. Note that for $b\in G$, $\phi(x,b)$ defines the set 
$b\cdot X^{-1}$. As $X$ is nongeneric, so is $X^{-1}$ so also  $b\cdot X^{-1}$ for all $b\in G$. Hence, as $\mu$ is generic, $\mu(\phi(x,b)) = 0$ for all $b$.  By Proposition 2.1, for some $n$ 
$\mu^{(n)}(\exists y(\phi(x_{1},y)\wedge .. \wedge \phi(x_{n},y))) = 0$. Let $p$ be any weakly random type for $\mu$  (which in this case amounts to a global generic type, which note is $M_{0}$-invariant). So by Lemma 1.2 the formula 
$\exists y(\phi(x_{1},y)\wedge .. \wedge \phi(x_{n},y)))\notin p^{(n)}$. Let $(a_{1},..,a_{n})$ realize $p^{(n)}|M_{0}$.
Then $(a_{1},..,a_{n})$ extends to an $M_{0}$-indiscernible sequence $(a_{i}:i=1,2,....)$, a Morley sequence in $p$ over $M_{0}$, and  $\models \neg \exists y(\phi(a_{1},y) \wedge ... \wedge \phi(a_{n},y))$. So  in particular
$\{\phi(a_{i},y):i=1,2,...\}$ is inconsistent. Hence the formula $\phi(a_{i},y)$ divides over $M_{0}$, so also divides over $\emptyset$. But $\phi(a_{1},y)$ defines the set  $a_{1}\cdot X$, so $a_{1}\cdot X$ divides over $\emptyset$ as required.
\end{proof}

Recall that we called a global type $p(x)$ of a $\emptyset$-definable group $G$, left $f$-generic if every left $G$-translate of $p$ does not fork over $\emptyset$.

We conclude the following (answering positively Problem 5.5 from \cite{NIP2} as well as strengthening Lemma 4.14 of \cite{CPI}):

\begin{cor} Suppose $G$ is $fsg$ and $p(x) \in S_{G}(\monster)$. Then the following are equivalent: 
\newline
(i) $p$ is generic,
\newline
(ii) $p$ is left (right) $f$-generic,
\newline
(iii) (Left or right) $Stab(p)$ has bounded index in $G$ (where  left $Stab(p) = \{g\in G:g\cdot p = p\}$).  
\end{cor}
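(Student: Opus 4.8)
The plan is to prove the equivalence in two halves: $(i)\Leftrightarrow(ii)$, which is essentially the group-theoretic repackaging of Proposition~3.2, and $(i)/(ii)\Leftrightarrow(iii)$, which I would extract from the structure theory of the connected component $G^{00}$. Throughout I would use left--right symmetry freely: by Fact~3.1(i) left genericity coincides with right genericity, and the analogous symmetry for non-forking translates lets me phrase everything on, say, the left; the parenthetical alternatives in $(ii)$ and $(iii)$ then follow at once. I would also record at the outset that, by Fact~3.1(iv), ``$p$ generic'' is the same as ``$p$ weakly random for the measure $\mu$'' (every formula in $p$ has positive $\mu$-measure), a reformulation convenient for condition $(iii)$.

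For $(i)\Rightarrow(ii)$ I would argue that if $p$ is generic then so is every left translate $g\cdot p$, since translation preserves genericity; by the left-to-right direction of Proposition~3.2 every generic definable set is non-forking over $\emptyset$, and since a complete type forks over $\emptyset$ exactly when it contains a forking formula, no $g\cdot p$ forks, i.e.\ $p$ is left $f$-generic. For $(ii)\Rightarrow(i)$ I would argue contrapositively: if $p$ is not generic, choose $\phi(x)\in p$ defining a non-generic set $X$; by Proposition~3.2 there is $g\in G$ with $g\cdot X$ forking over $\emptyset$. The formula $\phi(g^{-1}x)$ defining $g\cdot X$ lies in $g\cdot p$, so $g\cdot p$ forks over $\emptyset$, contradicting $f$-genericity. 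This settles $(i)\Leftrightarrow(ii)$, and it is the part that genuinely uses the new Proposition~2.1 through Proposition~3.2; the rest is bookkeeping with translates.

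For the link with $(iii)$ I would set up the canonical $G$-equivariant map $\pi\colon S_G(\monster)\to G/G^{00}$, where $G^{00}$ is the smallest type-definable subgroup of bounded index, sending a type to the unique $G^{00}$-coset on which it concentrates; here $\pi(g\cdot p)=\bar g\,\pi(p)$ and left translation on $G/G^{00}$ is free. From $g\cdot p=p$ one gets $\bar g\,\pi(p)=\pi(p)$, hence $\bar g=1$, i.e.\ $\mathrm{Stab}(p)\subseteq G^{00}$ for \emph{every} $p$. Consequently condition $(iii)$ should amount to the single equality $\mathrm{Stab}(p)=G^{00}$: the subgroup $G^{00}$ has bounded index, and to see that bounded index forces equality I would invoke that for $fsg$ groups $G^{000}=G^{00}$, so that a bounded-index stabilizer cannot sit properly inside $G^{00}$.

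It then remains to match $\mathrm{Stab}(p)=G^{00}$ with $f$-genericity. For $(iii)\Rightarrow(ii)$, if $\mathrm{Stab}(p)=G^{00}$ then $p$ is $G^{00}$-invariant and its $G$-orbit is bounded, and such a type is $f$-generic by the theory of $f$-generic types in definably amenable $NIP$ groups. For $(ii)\Rightarrow(iii)$ I would use that an $f$-generic (equivalently generic, equivalently $\mu$-weakly-random) type has stabilizer containing the connected component---the $NIP$ analogue of the classical fact that the stabilizer of a generic type in a stable group is $G^{0}$---so that $G^{00}\subseteq\mathrm{Stab}(p)$ and, with the previous paragraph, $\mathrm{Stab}(p)=G^{00}$. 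I expect the main obstacle to be exactly this last input: establishing, or correctly citing and strengthening Lemma~4.14 of \cite{CPI}, that the stabilizer of a generic type is precisely $G^{00}$, together with the converse propagation of bounded index back to genericity. This is the only place where the argument leaves the self-contained world of Proposition~3.2 and leans on the external connected-component machinery; everything else is the free action on $G/G^{00}$ and the translate computation above.
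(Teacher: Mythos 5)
Your handling of (i)$\Leftrightarrow$(ii) matches the paper's (Proposition 3.2 plus the definitions), and your use of the fact that the stabilizer of a generic type is exactly $G^{00}$ is precisely the paper's citation of Corollary 4.3 of \cite{NIP1} for the direction from genericity to (iii). The genuine gap is in (iii)$\Rightarrow$(i)/(ii). Your key step --- ``a bounded-index stabilizer cannot sit properly inside $G^{00}$ because $G^{000}=G^{00}$'' --- does not go through as stated: $G^{000}$ is the smallest bounded-index subgroup that is \emph{invariant under automorphisms over a small set}, and an arbitrary bounded-index subgroup need not contain it. $\mathrm{Stab}(p)$ is only automatically invariant over a small model when $p$ itself is, which is exactly what you cannot assume for an arbitrary $p\in S_{G}(\monster)$ in this direction; the whole point is to rule out a pathological, non-invariant $p$ whose stabilizer happens to be large. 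The patch you gesture at --- bounded orbit implies $f$-generic ``by the theory of $f$-generic types in definably amenable $NIP$ groups'' --- is anachronistic relative to this paper (that theory came later, and its bounded-orbit theorem is itself a substantial measure-theoretic result), and in any case it essentially \emph{is} the implication (iii)$\Rightarrow$(ii) you are trying to prove, so at the level of tools available here the argument is circular.

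The paper avoids all of this with a short, self-contained counting argument for $\neg$(i)$\Rightarrow\neg$(iii). Given nongeneric $p$, choose a nongeneric $X\in p$ defined over a small model $M$. Since the $fsg$ property is preserved under naming parameters, one runs the right-to-left proof of Proposition 3.2 in $Th(\monster,m)_{m\in M}$ to get $g\in G$ with $g\cdot X$ \emph{dividing over $M$} (note this relativization is a point your bookkeeping would also have to address: Proposition 3.2 as stated only gives forking over $\emptyset$). Dividing yields an $M$-indiscernible sequence $(g_{\alpha}:\alpha<\bar\kappa)$ and an $n$ such that $g_{\alpha_{1}}\cdot X\cap\dots\cap g_{\alpha_{n}}\cdot X=\emptyset$ whenever $\alpha_{1}<\dots<\alpha_{n}$. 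Since $g_{\alpha}\cdot X\in g_{\alpha}\cdot p$, any fixed global type can occur among the translates $g_{\alpha}\cdot p$ at most $n-1$ times, so there are $\bar\kappa$ many distinct translates of $p$ and $\mathrm{Stab}(p)$ has unbounded index --- with no appeal to $G^{00}$, $G^{000}$, or definable amenability. (Your observation that $\mathrm{Stab}(p)\subseteq G^{00}$ always, via the free action of $G/G^{00}$ on the cosets a type concentrates on, is correct but plays no role in the paper's proof.) To repair your write-up, replace the connected-component machinery in (iii)$\Rightarrow$(i) by this dividing-and-counting argument.
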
 
\begin{proof} The equivalence of (i) and (ii) is given by Proposition 3.2 and the definitions.  We know from \cite{NIP1}, Corollary 4.3, that if $p$ is generic then $Stab(p)$ is precisely $G^{00}$. Now suppose that $p$ is nongeneric. Hence there is a definable set $X\in p$ such that $X$ is nongeneric. Let $M$ be a small model  over which $X$ is defined. Note that the $fsg$ property is invariant under naming parameters. Hence $G$ is $fsg$ in 
$Th(\monster,m)_{m\in M}$. By Proposition 3.2  (as well as what is proved in ``Right to left" there), for some $g\in G$, $g\cdot X$ divides over $M$. As $X$ is defined over $M$ this means that there is an $M$-indiscernible sequence 
$(g_{\alpha}:\alpha < {\bar\kappa})$  (where $\bar\kappa$ is the cardinality of the monster model) and some $n$ such 
that  $g_{\alpha_{1}}\cdot X \cap ... \cap g_{\alpha_{n}}\cdot X = \emptyset$ whenever $\alpha_{1} < ... < \alpha_{n}$.
This clearly implies that among $\{g_{\alpha}\cdot p: \alpha < \bar\kappa\}$, there are $\bar\kappa$ many types, whereby $Stab(p)$ has unbounded index.
\end{proof}

\end{document}